\documentclass[12pt,reqno]{amsart}

\usepackage{amsmath}
\usepackage{amsfonts}
\usepackage{amsthm}
\usepackage{amstext}
\usepackage{amssymb}
\usepackage{txfonts}
\usepackage{enumerate}
\usepackage{graphicx,color}
\usepackage{setspace}
\usepackage{txfonts}
\usepackage{epsfig}
\usepackage{color}

\usepackage{verbatim}

\newcommand{\R}{\mathbb{R}}

\newcommand{\eps}{\varepsilon}

\renewcommand{\div}[1]{\operatorname{div}{ #1 }}
\DeclareMathOperator*{\med}{median} 

\theoremstyle{plain}
\newtheorem{theorem}{Theorem}

\newtheorem{proposition}{Proposition}
\newtheorem{conjecture}{Conjecture}

\theoremstyle{definition}
\newtheorem{definition}{Definition}

\begin{document}

\title[Medians and $1$-harmonic functions]{Median values, $1$-harmonic functions, and functions of least gradient}

\author[M. Rudd]{Matthew B. Rudd}
\email{mbrudd@sewanee.edu}
\address{Department of Mathematics \\
Sewanee: The University of the South \\
Sewanee, TN ~ 37383}

\author[H. Van Dyke]{Heather A. Van Dyke}
\email{vandykeheather@yahoo.com}
\address{Department of Mathematics \\
Washington State University \\
Pullman, WA ~  99164}

\date{\today}
\begin{abstract}
Motivated by the mean value property of harmonic functions, we introduce the local and global median value properties 
for continuous functions of two variables.  We show that the Dirichlet problem associated with the local median 
value property is either easy or impossible to solve, and we prove that continuous functions with this property are $1$-harmonic in the 
viscosity sense.   We then close with the following conjecture: a continuous function having the global median value property and prescribed boundary values 
coincides with the function of least gradient having those same boundary values.
\end{abstract}

\maketitle

\begin{section}{Introduction}

Let $\Omega \subset \R^{N}$ be a domain with smooth boundary $\partial \Omega$, 
let $B(x,r)$ denote a ball with radius $r$, center $x$, and boundary $\partial B(x,r)$, 
and let $\fint_{E}{ f \, d\mu}$ denote the average of $f$ over the set $E$ with respect to the
measure $\mu$.  It is well-known
that the continuous function $u$ is harmonic in $\Omega$ if and only if 
\begin{equation} \label{mean-vp}
u(x) = \fint_{\partial B(x,r)}{ u(s) \, ds }  \quad \textrm{whenever} \quad x \in \Omega \quad 
\textrm{and} \quad B(x,r) \Subset \Omega \, ;
\end{equation}
this is the famous mean value property of harmonic functions \cite{evans:pde98}.
As is also well-known, imposing a Dirichlet boundary condition determines a unique solution of
the functional equation (\ref{mean-vp}).
This equation has been well understood for a long time, and 
it is natural to wonder what happens if we replace the averaging operator in (\ref{mean-vp}) 
with a different statistical measure.   

What happens, for example, if we replace the average in (\ref{mean-vp}) with the
median of $u$ over spheres centered at $x$?   Requiring $u$ to
be continuous makes this question meaningful; as discussed below, medians are not
well-defined for functions that are merely measurable or integrable.

When $N=2$, which we 
assume henceforth, we can use elementary tools 
to analyze continuous solutions of the functional equation
\begin{equation} \label{loc-mvp}
u(x) = \med_{s \in \partial B(x,r)}{ \left\{ \, u(s) \, \right\} } \quad \textrm{for} \quad
x \in \Omega \quad \textrm{and} \quad 0 < r \leq R(x) \, ,
\end{equation}
where $R : \Omega \to \R$ is a continuous  function such that
\[
0 < R(x) \leq \operatorname{dist}{(x, \partial \Omega)} \, .
\]
Since $R(x)$ can be strictly smaller than the distance $\operatorname{dist}{(x, \partial \Omega)}$
from $x$ to $\partial \Omega$,  equation (\ref{loc-mvp}) defines the 
\textit{local median value property}, a property that does not seem to have been 
studied before.  We will show that equation (\ref{loc-mvp}) is closely related to 
a  nonlinear partial differential equation (PDE) that has a reputation for being 
difficult.   The degeneracy of this particular PDE precludes direct applications of 
standard techniques, making the basic problems of existence, uniqueness, and regularity
of its solutions rather thorny.  Our analysis of equation (\ref{loc-mvp}) clarifies some of these issues:  
if we impose a Dirichlet boundary condition on solutions of (\ref{loc-mvp}), we will see that 
we should generally expect either nonexistence or nonuniqueness, depending on the interaction 
of the boundary data and  the geometry of $\partial \Omega$.  

The mean value property (\ref{mean-vp}) of harmonic functions is global, in the sense that 
it holds at $x \in \Omega$ for any ball $B(x,r)$ that is strictly contained in $\Omega$.  
The global version of (\ref{loc-mvp}) is also of interest;  functions with 
the \textit{global median value property} satisfy the functional equation
\begin{equation} \label{glob-mvp}
u(x) = \med_{s \in \partial B(x,r)}{ \left\{ \, u(s) \, \right\} } \quad \textrm{whenever} \quad 
x \in \Omega \quad \textrm{and} \quad B(x,r) \Subset \Omega \, .
\end{equation}
Combining equation (\ref{glob-mvp}) with a Dirichlet boundary condition yields a problem
that is harder to solve than its local counterpart.
We conjecture that the solutions of this global problem are precisely the functions
of least gradient, for which existence and uniqueness results are only available when $\Omega$ 
is strictly convex \cite{ziemer:flg99}.  

\end{section}

\begin{section}{The local median value property}

If $f$ is a measurable function with respect to the measure $\mu$, then $m$ is a median of $f$ over the 
measurable set $E$ if and only if 
\[
\mu\{ x \in E : f(x) \geq m \} \geq \frac{ \mu( E ) }{2}  \quad\textrm{and} \quad
\mu\{ x \in E : f(x) \leq m \} \geq \frac{ \mu( E ) }{2}  \ \ .
\]
It is easy to see that such a median $m$ need not be unique.
If $f$ is integrable, then $m$ is a median of $f$ over $E$ if and only if \cite{stroock:pta93}
\[
\int_{E}{ | f(x) - m | \, d\mu(x) } = \min_{y \in \R}{ \left\{ \, \int_{E}{ | f(x) - y | \, d\mu(x)} \, \right\} } \, ,
\]
but integrability still does not guarantee a unique median.  Using this variational characterization, however,
Noah proved in \cite{noah:mcf08} that continuity  suffices to determine median values uniquely.   
By different methods, Waksman and Wasilewsky proved the same result in \cite{waksman:tll77} for continuous 
functions on planar domains.  By working exclusively with continuous functions, we are 
therefore assured that medians over spheres are well-defined and that the functional equations (\ref{loc-mvp})
and (\ref{glob-mvp}) make sense.

To understand (\ref{loc-mvp}), we first observe that any continuous 
function whose level sets are monotonically ordered straight lines has the 
local median value property.  On the other hand, if $u$ is a smooth function 
such that $Du(x_{0}) \neq 0$ at  $x_{0} \in \Omega$, then  the Implicit Function Theorem 
guarantees that, on some sufficiently small ball $B(x_{0},r)$, 
the level sets of $u$ are smooth curves whose levels vary monotonically; if $u$ also satisfies
(\ref{loc-mvp}), the level curve through $x_{0}$ must be a diameter of $B(x_{0},r)$. 
The level sets of a smooth solution of (\ref{loc-mvp}) must therefore be straight lines 
wherever its gradient does not vanish.   Our first result, Theorem \ref{straight}, eliminates this smoothness 
assumption.  Before going through its proof,  we note the following obvious and useful result, 
a weak maximum principle for solutions of (\ref{loc-mvp}).

\begin{proposition} \label{max-p}
A nonconstant continuous solution of (\ref{loc-mvp}) cannot have strict interior maxima or minima.
\end{proposition}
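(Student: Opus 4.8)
The plan is to argue by contradiction using the median value property directly, exploiting the fact that the median of a continuous function on a sphere is attained. Suppose $u$ is a nonconstant continuous solution of (\ref{loc-mvp}) that attains a strict interior maximum at some point $x_0 \in \Omega$; that is, $u(x_0) > u(y)$ for all $y$ in some punctured neighborhood of $x_0$. Choose $r$ with $0 < r \leq R(x_0)$ small enough that $u(s) < u(x_0)$ for every $s \in \partial B(x_0, r)$. Then the set $\{ s \in \partial B(x_0,r) : u(s) \geq u(x_0) \}$ is empty, so its one-dimensional Hausdorff measure is $0$, which is strictly less than $\tfrac{1}{2} \mathcal{H}^1(\partial B(x_0,r))$. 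By the defining inequalities for a median, $u(x_0)$ cannot be a median of $u$ over $\partial B(x_0,r)$, contradicting (\ref{loc-mvp}).

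First I would make precise the notion of ``strict interior maximum'': it means there is a neighborhood on which the maximum is attained only at the center point. The symmetric argument handles strict interior minima, replacing $\geq$ by $\leq$ and reversing the inequality on $u$. The key structural fact being used is simply that a continuous function on the compact set $\partial B(x_0,r)$ that is everywhere strictly below the value $u(x_0)$ forces the ``upper'' superlevel set at height $u(x_0)$ to be empty, hence of measure zero, which violates one of the two median inequalities. No deep machinery is needed — uniqueness of the median (from Noah \cite{noah:mcf08} or Waksman--Wasilewsky \cite{waksman:tll77}) is not even required here, only the two measure inequalities that any median must satisfy.

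The one point deserving care, rather than a genuine obstacle, is the choice of radius: because $R(x_0)$ may be strictly smaller than $\operatorname{dist}(x_0, \partial \Omega)$, one must confirm that the neighborhood on which $u$ is strictly below $u(x_0)$ can be taken small enough that some admissible radius $r \leq R(x_0)$ fits inside it. Since $R(x_0) > 0$ and strict interior maxima come with a genuine neighborhood, any $r$ smaller than both $R(x_0)$ and the radius of that neighborhood works, so this is immediate. I expect the entire argument to be a short paragraph; the result is labeled ``obvious'' in the text precisely because the median's two defining inequalities make the contradiction transparent once the superlevel set is seen to be empty.
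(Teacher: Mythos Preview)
Your argument is correct and is precisely the sort of reasoning the authors have in mind: the paper does not supply a proof of Proposition~\ref{max-p} at all, simply calling it ``obvious,'' and your contradiction via the empty superlevel set on a small admissible circle is the natural way to make that obviousness explicit.
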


Figure \ref{ridge_line} illustrates a variant of this maximum principle that will be essential below.  It 
depicts a continuous function $u$ that has a ``ridge line," a curve along which $u = a$ such that $u < a$ on either
side (of course, $u$ could just as well be larger than $a$ on either side of this curve).  Our proof of Theorem \ref{straight} 
relies on the fact that such a situation cannot hold if $u$ has the local median value property everywhere.

\begin{figure}[htbp]
\includegraphics{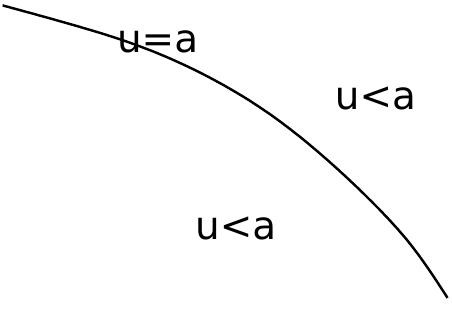} 
\caption{A continuous function cannot satisfy  (\ref{loc-mvp}) along a ridge line.}
\label{ridge_line}
\end{figure}

\begin{theorem} \label{straight}
Let $u$ be a nonconstant continuous solution of (\ref{loc-mvp}).  If $K$ is a path-connected 
component of some level set of $u$, then $\overline{ \partial K \cap \Omega } $ consists of line
segments with endpoints on $\partial \Omega$.
\end{theorem}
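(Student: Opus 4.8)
The plan is to reduce Theorem~\ref{straight} to a purely local statement about $\{u=a\}$ near a boundary point of $K$, prove that statement with the median identity, and then propagate the resulting segments out to $\partial\Omega$. Here $a$ denotes the value of $u$ on $K$. Fix $x_{0}\in\partial K\cap\Omega$. Since $\partial K\subseteq\overline{K}\subseteq\{u=a\}$ we have $u(x_{0})=a$; and since $\partial K\cap\operatorname{int} K=\emptyset$ while every component of the open set $\operatorname{int}\{u=a\}$ is open, hence path-connected, hence contained in a single path-component of $\{u=a\}$, the point $x_{0}$ is not interior to $\{u=a\}$, so every ball about $x_{0}$ meets $\{u\neq a\}$. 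Writing $C_{r}=\partial B(x_{0},r)$ for $0<r\le R(x_{0})$ and denoting by $p(r),q(r),m(r)$ the arclengths of $\{u>a\}\cap C_{r}$, $\{u<a\}\cap C_{r}$, $\{u=a\}\cap C_{r}$, the requirement that $a=u(x_{0})$ be a median of $u$ on $C_{r}$ reads
\[
p(r)+m(r)\geq\pi r, \qquad q(r)+m(r)\geq\pi r;
\]
combined with $p(r)+q(r)+m(r)=2\pi r$ this forces $p(r),q(r)\le\pi r$, with equality in both displayed inequalities whenever $m(r)=0$. This is the quantitative form of the ``no ridge line'' phenomenon of Figure~\ref{ridge_line}: along a curve where $u=a$ with $u<a$ on both nearby sides, $\{u\ge a\}$ would meet each small $C_{r}$ in a set of measure zero, contradicting $p(r)+m(r)\ge\pi r$.

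The engine for producing actual line segments is a supporting-line argument. Shrink $\delta>0$ so that $\overline{B(x_{0},\delta)}\subseteq\Omega$, fix a unit vector $e$, and let $z$ maximize $\ip{x}{e}$ over the compact set $\{u\ge a\}\cap\overline{B(x_{0},\delta)}$, assuming $z$ interior. For small $s$, the open half-disk of $B(z,s)$ on the $e$-side of $L_{z}:=\{x:\ip{x-z}{e}=0\}$ consists of points $x$ with $\ip{x}{e}>\ip{z}{e}$, hence with $u(x)<a$; therefore $\{u\ge a\}\cap\partial B(z,s)$ is contained in the complementary closed half-circle, of length $\pi s$. Since the median identity at $z$ gives $|\{u\ge u(z)\}\cap\partial B(z,s)|\ge\pi s$ and $\{u\ge u(z)\}\subseteq\{u\ge a\}$ (because $u(z)\ge a$), these nested closed subsets of the circle all coincide; so $u\ge a$ on the closed $(-e)$-side half-disk about $z$ while $u<a$ on the open $e$-side half-disk, whence continuity across $L_{z}$ forces $u\equiv a$ on the diameter $L_{z}\cap B(z,s)$ and $u(z)=a$. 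Thus $\partial K$ contains a genuine line segment through every point arising as such a supporting maximizer; this is the rigorous counterpart of the computation sketched before the theorem, where $Du\ne 0$ at a point forced the level line there to be a diameter.

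The remaining task is to show that near any $x_{0}\in\partial K\cap\Omega$ this already describes $\partial K$: running the supporting argument in all directions and over balls shrinking to $x_{0}$, and combining it with the median balance and the weak maximum principle of Proposition~\ref{max-p}, one must rule out the competing local pictures --- a piece of $\{u=a\}$ of positive area not bounded by a line, a boundary arc that genuinely curves, and a branch point from which several level segments issue (an odd number producing a ridge at once, an even number requiring a further argument to force them into full lines). I expect this classification to be the main obstacle, precisely because continuity permits level sets with positive measure and with empty interior, so one must manufacture honest line segments out of the bare fact that $\{u=a\}$ fills a balanced arc of every small circle. Granting the local description, the global conclusion is routine: extend a segment $\sigma\subseteq\partial K\cap\Omega$ maximally inside $\Omega$; were an endpoint $x_{1}$ of the extension to lie in $\Omega$, the local description at $x_{1}$ --- a full diameter or half-disk edge through $x_{1}$ --- would force $\partial K$ to continue past $x_{1}$ along the same line, contradicting maximality. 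Hence both endpoints lie on $\partial\Omega$, and $\overline{\partial K\cap\Omega}$ is a union of such chords.
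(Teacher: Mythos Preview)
Your median-balance bookkeeping and the supporting-line lemma are correct and constitute a different attack from the paper's.  The paper never uses an extremal-point device; it instead takes for granted that $\{u=a\}\cap\overline{B(x,r)}$ decomposes into finitely many simple arcs and argues by cases: a single arc must be a diameter (Step~1); several separable arcs must each be radial segments, with an odd number excluded by sign alternation in the complementary sectors and four or more excluded by an angle argument applied at a nearby point on one of the segments (Step~2, Figure~\ref{step2_2}); the case where $K$ has nonempty interior is then reduced to the thin case by a separation argument (Step~4).  Your supporting-line idea has the genuine virtue of producing a straight segment through $z$ without assuming any arc structure a priori, which the paper's Step~1 quietly does.

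That said, what you have written is not yet a proof, and you say so: ``I expect this classification to be the main obstacle'' followed by ``Granting the local description'' is precisely where the theorem lives.  Two concrete holes remain.  First, your lemma needs the maximizer $z$ of $\ip{\cdot}{e}$ over $\{u\ge a\}\cap\overline{B(x_{0},\delta)}$ to lie in the \emph{open} ball, and you give no mechanism for choosing $e$ (or $\delta$) to guarantee this; at a putative branch point where $\{u\ge a\}$ locally coincides with a union of opposite closed sectors, the maximizer lands on $\partial B(x_{0},\delta)$ for every direction $e$, so the lemma is silent exactly where you need it to speak.  Second, you have not eliminated even-order branching, only flagged it.  The paper's mechanism for this is not infinitesimal: it selects a point $y$ near but distinct from the branch point and a circle about $y$ large enough to reach an adjacent sector, then reads off a violation of (\ref{loc-mvp}) from the fact that some sector angle is at most $\pi/2$.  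Some argument using a circle of definite radius relative to the local geometry --- not merely $r\to 0$ at a single point --- appears to be unavoidable, and your outline does not supply one.
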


\begin{proof}
\quad   
Let $K$ be a path-connected component of the level set $u^{-1}(a)$,
and let $x \in K$.    The local median value property guarantees that points in $K$ 
cannot be isolated; consequently, at least one nontrivial path must pass through $x$.

\textbf{Step 1.} \quad Suppose that, for some $r >0$, $\overline{B(x,r)} \cap u^{-1}(a)$ consists of one simple path $\Gamma$
with endpoints $y, z \in \partial B(x,r)$.  Decrease $r$ (if necessary) so that (\ref{loc-mvp})
holds on $\partial B(x,r)$.   By the continuity of $u$ and the  maximum principle,
$y$ and $z$ must divide $\partial B(x,r)$ into two open arcs such
that $u > a$ on one of them and $u < a$ on the other.  The local median value property forces these two arcs to
have the same length; $y$ and $z$ are therefore antipodal points and $\Gamma$ must be 
a diameter of $B(x,r)$.

\smallskip

\begin{figure}[htbp]
\includegraphics{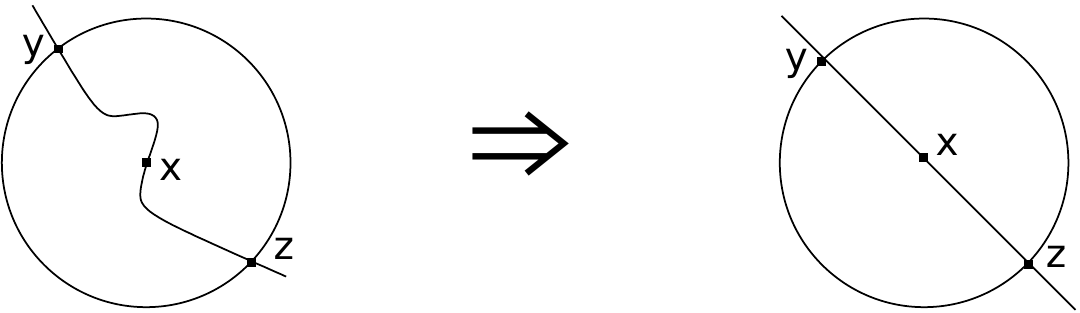}
\caption{A simple path through $x$ must be straight.}
\label{step1}
\end{figure}

\textbf{Step 2.} \quad Now suppose that, for some $r > 0$, $\overline{B(x,r)} ~ \cap ~ u^{-1}(a) \setminus \{ \, x \, \} $ consists of 
simple paths that are not intersected by any other paths.  It follows from the preceding argument that these paths must be line 
segments emanating from $x$ (Figure \ref{step2_1}).  

\begin{figure}[htbp]
\centering
\includegraphics{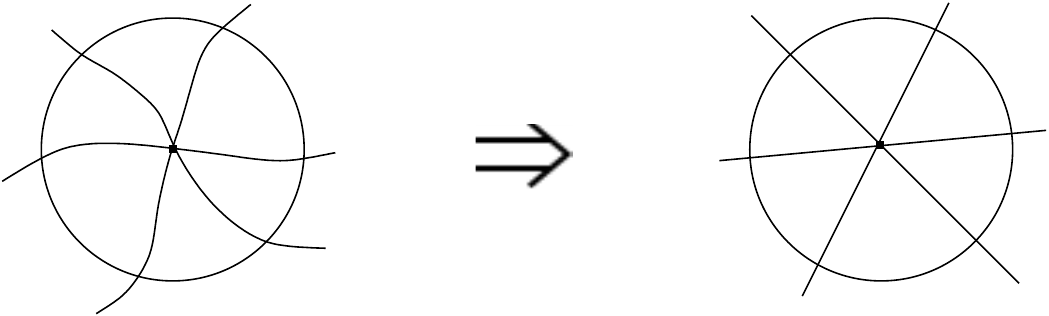}
\caption{Separable paths through $x$ must be straight.}
\label{step2_1}
\end{figure}

Continuity and  the maximum principle prohibit an odd number of line segments from meeting at $x$, since $u$ must
be alternately larger than $a$ and smaller than $a$ in the sectors determined by these segments.  Moreover,
four or more segments cannot meet at $x$ : if the angle between two adjacent segments were less than or equal to $\pi/2$, then
the local median value property would be violated (see Figure \ref{step2_2}).  Applying Step 1, we conclude that the only path through $x$ inside $B(x,r)$ is a diameter of $B(x,r)$.

\begin{figure}[htbp]
\centering
\includegraphics{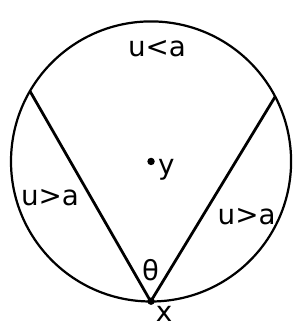}
\caption{If $\theta$ were less than $\pi/2$, then (\ref{loc-mvp}) would force $u(y) > a$, a contradiction;
if $\theta$ were exactly $\pi/2$, then we would have $u(y) = a$, another contradiction.  Consequently, $\theta > \pi/2$.}
\label{step2_2}
\end{figure}

\textbf{Step 3.} \quad  If the paths emanating from $x$ can be separated, then the previous step shows that, on some sufficiently small 
neighborhood of $x$, there can be only one straight path through $x$.  In fact, the observations in Step 2 show that, if two of the paths emanating from 
$x$ cannot be separated, then $K$ must have a nonempty interior.  Consequently, if the interior of $K$ is empty, $K$ must be a line segment.
By the local median value property, the endpoints of this segment cannot be in $\Omega$; its endpoints must be on $\partial \Omega$.
The theorem therefore holds for path-connected components with no interior.

\textbf{Step 4.} \quad Finally, suppose that the interior of $K$ is nonempty, pick a point $y \in K$, and let $z$ be a point in $K$ such that
the segment $L$ connecting $y$ and $z$ is inside $\Omega$.    
If $L$ did not belong to $K$, then the continuity of $u$ would guarantee the existence of a point $p$ on $L$ 
whose level set component had empty interior.  By the previous step, that component would be a line segment with
endpoints on the boundary of $\Omega$,  separating $y$ from $z$ and contradicting the fact that $y$ and $z$ belong to $K$.
This completes the proof.

\end{proof}

\end{section}

\begin{section}{The Dirichlet problem}

We now seek a continuous solution $u$ of (\ref{loc-mvp}) that has prescribed values along
the boundary of $\Omega$.  Theorem \ref{straight} tells  us how to go about solving the resulting Dirichlet
problem: roughly speaking, the level sets of $u$ need to be line segments or polygons.

\begin{figure}[htbp]
\centering
\includegraphics{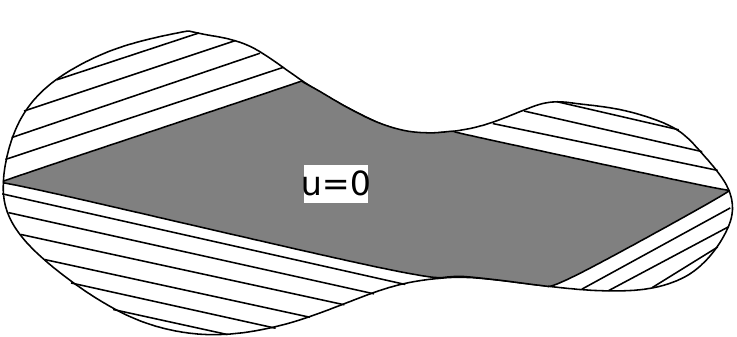}
\caption{A solution of the Dirichlet problem on a nonconvex domain $\Omega$.}
\label{nonconvex1}
\end{figure}

Figure \ref{nonconvex1} illustrates a solution of this problem for a nonconvex domain $\Omega$ 
and very particular boundary data.  This example is artificial, however; in general, we cannot expect to solve
the Dirichlet problem on a region $\Omega$ that is not strictly convex, precisely because we cannot connect arbitrary 
pairs of boundary points with line segments that remain inside $\Omega$.  See, for instance, Figure \ref{nonconvex2}.

\begin{figure}[htbp]
\centering
\includegraphics{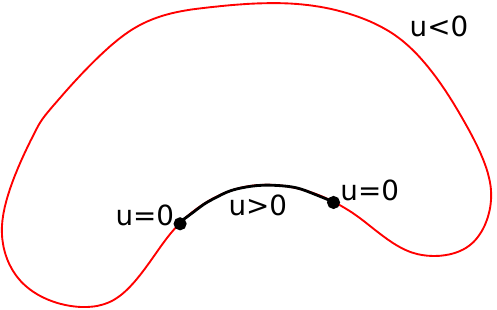}
\caption{The Dirichlet problem cannot necessarily be solved if $\Omega$ is not strictly convex.  Here, we cannot connect
positive boundary values with line segments inside $\Omega$.}
\label{nonconvex2}
\end{figure}

If $\Omega$ is strictly convex, on the other hand, solving the Dirichlet problem is much easier.
As an example, let $\Omega \subset \R^{2}$ be the open unit ball, and let $g(x,y) = |y|$ 
for $(x,y) \in \partial \Omega$.  It is easy to see that, for any 
$\alpha \in (0,1)$, the function
\begin{equation} \label{u-alpha}
u_{\alpha}(x,y) := \left\{ \begin{array}{ccl}
|y|  & \textrm{when} & |y| \geq \alpha \, , \\
\sqrt{1-x^{2}} & \textrm{when} & |y| < \alpha \quad \textrm{and} \quad 
|x| \geq \sqrt{1 - \alpha^{2}} \, , \\
\alpha & \textrm{otherwise} & 
\end{array} \right.
\end{equation}
has the local median value property and satisfies $u = g$ on $\partial \Omega$.
Figure \ref{alpha-plots} provides contour plots of solutions for three values of $\alpha$, 
illustrating the fact that different solutions of the Dirichlet problem correspond to the 
different ways of connecting the boundary values with straight lines.  

\begin{figure}[htbp]
\centering
\includegraphics{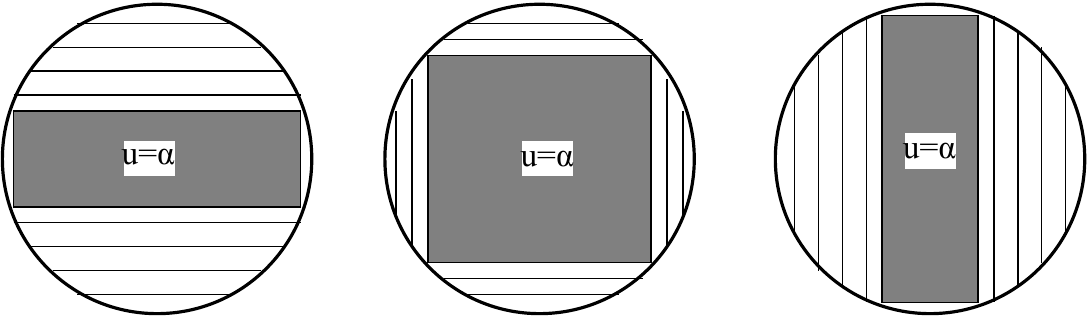}
\caption{Solutions $u_{\alpha}$ of the Dirichlet problem on the unit ball with $u_{\alpha}(x,y) = |y|$ 
on its boundary.  From left to right, $\alpha < \sqrt{2}/2$,  $\alpha = \sqrt{2}/2$, and $\alpha > \sqrt{2}/2$.}
\label{alpha-plots}
\end{figure}

An existence result for reasonable boundary data follows directly. 
Specifically, let $\Omega$ be strictly convex and bounded, and 
suppose that $g : \partial \Omega \to \R$ is a nonconstant continuous function whose level sets have finitely many endpoints.
(If $g$ is constant,  it follows from the maximum principle that the only solution of the Dirichlet problem is constant.)
By parametrizing $\partial \Omega$, we can regard $g$ as a continuous $2\pi$-periodic function, $g : [0,2\pi] \to \R$.
Define $m$ and $M$ to be the minimum and maximum of $g$, respectively, pick any $\lambda \in (m,M)$, 
and define $\Lambda := g^{-1}( \lambda )$, the set of angles $\theta$ such that $g(\theta) = \lambda$.  Since
$\Lambda$ has finitely many endpoints, $\partial \Lambda = \{ \theta_{1}, \ldots, \theta_{k} \}$ for some $k$,
with $\theta_{1} < \theta_{2} < \ldots < \theta_{k}$.

Construct a generalized polygon whose vertices on $\partial \Omega$ correspond to the angles $\theta_{1}, \ldots, \theta_{k} $.
This is a generalized polygon since some of its edges could be curved portions of $\partial \Omega$; this will happen whenever 
$g(\theta) \equiv \lambda$ on the closed interval $[\theta_{j},\theta_{j+1}]$.
As for the other edges, we can proceed as follows.   Suppose, without loss of generality, that $g(\theta) > \lambda$ for $\theta_{j} < \theta < \theta_{j+1}$.  
For sufficiently small $\eps > 0 $, the level set $g^{-1}(\lambda + \eps) \cap [ \, \theta_{j}, \theta_{j+1} \, ]$ will consist of exactly two angles,
which we connect with a line segment.  If this level set never contains more than two angles as $\eps$ increases, then we are done with
this sector of $\Omega$.  Otherwise, there will be a smallest $\eps > 0$ such that 
the boundary of $g^{-1}(\lambda + \eps) \cap [ \, \theta_{j}, \theta_{j+1} \, ]$ consists 
of angles $\phi_{1}, \ldots, \phi_{\ell}$ for $\ell \geq 3$.   Construct a generalized polygon with vertices corresponding to these
$\ell$ angles, and then repeat this procedure in each resulting sector of $\Omega$.  The compactness of $\overline\Omega$ guarantees that
this process will terminate, yielding a solution of the Dirichlet problem; its continuity
follows directly from the continuity of $g$. We have thus proven 

\begin{theorem}
If $\Omega \subset \R^{2}$ is a strictly convex, bounded open set and $g : \partial \Omega \to \R$ is a nonconstant continuous
function whose level sets have finitely many endpoints, then
there is at least one continuous function $u$ that satisfies (\ref{loc-mvp}) and equals $g$ on $\partial \Omega$.
\end{theorem}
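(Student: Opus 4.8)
The plan is to make rigorous the iterative construction of level-set polygons sketched just before the statement, organizing it as a recursion on nested sectors of $\Omega$ and invoking Theorem \ref{straight} and the maximum principle (Proposition \ref{max-p}) only at the level of verifying that the resulting function actually satisfies (\ref{loc-mvp}). First I would fix the parametrization of $\partial\Omega$ and regard $g$ as a $2\pi$-periodic continuous function, and observe that since the level sets of $g$ have finitely many endpoints, there are only finitely many critical values, i.e. values $\lambda$ for which $g^{-1}(\lambda)$ fails to be a finite union of points and closed arcs; between consecutive critical values the combinatorial type of the level set is constant. This reduces the problem to finitely many ``strips'' of level values, and in each strip the construction is transparent.

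Next I would set up the core recursive step precisely. Given a closed sector of $\partial\Omega$ cut out by two angles $\theta_j<\theta_{j+1}$ on which $g>\lambda$ in the interior and $g=\lambda$ at the endpoints (the case $g<\lambda$ is symmetric, and the case $g\equiv\lambda$ on $[\theta_j,\theta_{j+1}]$ contributes a curved edge and terminates), I would define $\eps^*$ to be the infimum of those $\eps>0$ for which $g^{-1}(\lambda+\eps)$ meets $[\theta_j,\theta_{j+1}]$ in more than two angles. If $\eps^*$ equals the height $\max g - \lambda$ of the strip, connect the two boundary points at each level by a chord and we are done in this sector; these chords sweep out a filled-in region whose level sets are parallel segments with endpoints on $\partial\Omega$ (here strict convexity is used to keep the chords inside $\Omega$). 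If $\eps^*$ is attained with boundary angles $\phi_1<\dots<\phi_\ell$, $\ell\ge 3$, form the generalized polygon on these vertices, assign it the value $\lambda+\eps^*$, and recurse on each of the finitely many subsectors it creates, in each of which $g$ again has a single sign relative to $\lambda+\eps^*$ on the interior. I would package the output of this recursion as a continuous function $u$ defined on all of $\overline\Omega$, built by the same procedure applied to every sector determined by every critical level, and note that $u=g$ on $\partial\Omega$ by construction.

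The two points needing care are termination and continuity. For termination I would argue that each recursive call strictly decreases the ``height'' of the strip under consideration and that, because $g$ has only finitely many level-set endpoints, only finitely many distinct angle-configurations can arise; compactness of $\overline\Omega$ then bounds the recursion depth, so the process stops after finitely many steps. For continuity I would check that on the closure of each elementary cell (a ``fan'' of chords or a generalized polygon's interior) $u$ is continuous and that adjacent cells agree on their shared edges, the common value being the level $\lambda+\eps$ of that edge; since $g$ is continuous, the matching values along $\partial\Omega$ are consistent, and a finite gluing argument gives global continuity. Finally I would verify (\ref{loc-mvp}) pointwise: at a point in the interior of a chord-swept region the level sets are parallel straight segments, so every small circle is bisected by the level line through its center and the median equals $u$; at an interior point of a polygonal vertex configuration the same holds because an even number of segments meet at angles exceeding $\pi/2$, exactly the configuration shown admissible in the proof of Theorem \ref{straight}; and at points where $u$ is locally constant the property is immediate.

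The main obstacle I anticipate is not any single estimate but the bookkeeping of the recursion: one must be sure that the generalized polygons produced in nested sectors fit together without creating a spurious interior local extremum or a ``ridge line'' of the type ruled out in Theorem \ref{straight}, and that the sign condition ``$g$ has one sign relative to the current level on each subsector'' is genuinely inherited at every stage. Making the inductive hypothesis strong enough to carry this sign condition and the endpoint-finiteness down the recursion — while keeping all vertices on $\partial\Omega$ and all edges inside the strictly convex $\Omega$ — is the delicate part; once that invariant is correctly stated, each individual step is elementary.
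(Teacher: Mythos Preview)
Your proposal is essentially the paper's own argument, just organized more carefully: the same parametrization of $\partial\Omega$, the same recursion on sectors governed by the least $\eps$ at which the boundary level set acquires extra endpoints, the same termination via compactness and finiteness of endpoints, and the same appeal to continuity of $g$; you additionally spell out the pointwise verification of (\ref{loc-mvp}), which the paper leaves implicit.

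One correction to that verification: the proof of Theorem~\ref{straight} shows that an interior point where four or more level-segments meet is \emph{never} compatible with (\ref{loc-mvp}) --- with $k\geq 4$ rays the angles sum to $2\pi$, so some adjacent pair is at most $\pi/2$, which Step~2 shows is forbidden --- not that such a configuration is ``admissible'' when all angles exceed $\pi/2$. Fortunately this case never arises in the construction, since every polygonal vertex lies on $\partial\Omega$ by design; you should simply drop that clause and note instead that at an interior point on a polygon edge the level line is a diameter with $u$ constant on one side and strictly monotone on the other, which still gives the median. (A minor related slip: the chords in a swept strip are monotonically ordered straight lines but generally not parallel; straightness through the center is all that the median check needs.)
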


Implementing this procedure for different values of $\lambda$ can produce different solutions; in Figure \ref{alpha-plots} 
above, the three solutions correspond to choosing $\lambda = \alpha$ for three 
different values of $\alpha$. 
Consequently, we generally expect to find multiple solutions when $\Omega$ 
is strictly convex, and an intriguing problem for future work is to quantify the number of solutions that correspond to a 
given boundary condition.   For a domain $\Omega$ that is not strictly convex, it would also be interesting 
to characterize those boundary conditions for which solutions exist. 

\end{section}

\begin{section}{$1$-harmonic functions}

For $p > 1$, the continuous function $u$ is $p$-harmonic if and only if it is a viscosity solution of
\begin{equation} \label{p-harmonic}
- \div{ \left( \, |Du|^{p-2} Du \, \right) } = 0 \, .
\end{equation}
Juutinen et al. proved \cite{juutinen:evs01} that this is equivalent to the more common definition of 
$p$-harmonic functions as weak solutions of (\ref{p-harmonic}), the Euler-Lagrange equation
corresponding to the variational integral
\begin{equation} \label{p-func}
v \longmapsto \int_{\Omega}{ |Dv|^{p} \, dx } \, \quad \textrm{for} \quad v \in W^{1,p}(\Omega) \, .
\end{equation}
When $p>1$, the variational approach to (\ref{p-harmonic}) is especially elegant, since the associated 
functional (\ref{p-func}) is strictly convex and weakly lower semicontinuous and the Sobolev space $W^{1,p}(\Omega)$
is reflexive.  

Formally setting $p=1$, one can define a $1$-harmonic
function $u$ to be a viscosity solution of 
\[
-\div{\left(\frac{Du}{ \, |Du| \, }\right)} = 0 \, ,
\]
an equation that is not nearly as well understood as (\ref{p-harmonic}).  It is more complicated than (\ref{p-harmonic})
in several regards; the functional (\ref{p-func}) is no longer strictly convex, for example, and $W^{1,1}(\Omega)$ is not
as hospitable a setting for functional analysis as $W^{1,p}(\Omega)$ (as explained further in, e.g., \cite{evans:mtf92} and \cite{giusti:msf84}).

For various reasons, it is more convenient for us to define $u$ to be 1-harmonic if and only if it is a viscosity solution of 
\begin{equation} \label{1-harm} 
-\Delta_1 u := -|Du| \div{\left(\frac{Du}{ \, |Du| \, }\right)} = 0 \, .
\end{equation}
As clarified below in Definition \ref{visc}, we can loosely interpret this equation to mean that 
either $|Du| = 0$ or ~ $\div{\left(Du / |Du| \right)} = 0$, and we will see that 
continuous solutions of (\ref{loc-mvp}) are $1$-harmonic.  We first define viscosity solutions of (\ref{1-harm}), referring 
to \cite{juutinen:evs01} for more details when $p > 1$.

\begin{definition} \label{visc}
\quad The continuous function $\overline{u} : \Omega \to \R$ is a viscosity supersolution of (\ref{1-harm}) if and only if
\begin{enumerate}
\item $ \overline{u} \nequiv \infty$ and 
\item whenever $x_0\in\Omega$ and $\varphi\in C^2(\Omega)$ satisfies
\begin{equation}\label{supsol}\left\{\begin{array}{l}
\overline{u}(x_0)=\varphi(x_0) \, ,  \\
\overline{u}(x) > \varphi(x) \quad \mbox{for} \quad x\neq x_0, \quad \textrm{and} \\
D\varphi(x_0)\neq 0 \, , \\
\end{array}\right.
\end{equation}
$\varphi$ also satisfies 
\[
- \Delta_{1}\varphi(x_{0}) \geq 0 \, .
\]
\end{enumerate}
The continuous function $\underline{u}$ is a viscosity subsolution of (\ref{1-harm}) if and only if $-\underline{u}$ is a viscosity supersolution of (\ref{1-harm}), 
and $u$ is 1-harmonic in the viscosity sense if $u$ is both a viscosity subsolution and a viscosity supersolution of (\ref{1-harm}).
\end{definition}

The work in \cite{juutinen:evs01} motivates this definition, as Juutinen et al. proved that, when $p>1$, we only need to work with test functions 
whose gradients do not vanish at the point $x_{0}$ of interest.  It then follows that our definition of $1$-harmonic and the usual one are equivalent.

\begin{theorem} \label{LMVP-1harm}
\quad A continuous function $u : \Omega \to \R $ that satisfies (\ref{loc-mvp}) is $1$-harmonic in the viscosity sense.
\end{theorem}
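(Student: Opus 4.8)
The plan is to verify the two inequalities of Definition~\ref{visc} directly from the functional equation (\ref{loc-mvp}), cutting the work in half by a symmetry remark: the median of a continuous function is unique (as recalled above) and satisfies $\med_{s}\{-u(s)\} = -\med_{s}\{u(s)\}$, so $-u$ also solves (\ref{loc-mvp}); it therefore suffices to show that every continuous solution of (\ref{loc-mvp}) is a viscosity supersolution of (\ref{1-harm}), after which the subsolution property follows by applying that conclusion to $-u$. Notably, Theorem~\ref{straight} will not be needed: the argument exploits (\ref{loc-mvp}) only on small circles centered at the point of interest.

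First I would fix $x_{0} \in \Omega$ and a test function $\varphi \in C^{2}(\Omega)$ with $\varphi(x_{0}) = u(x_{0})$, $\varphi(x) < u(x)$ for $x \neq x_{0}$, and $D\varphi(x_{0}) \neq 0$, and reduce the desired conclusion $-\Delta_{1}\varphi(x_{0}) \geq 0$ to a statement about $\varphi$ alone. For $0 < r \leq R(x_{0})$, equation (\ref{loc-mvp}) says that $u(x_{0})$ is the unique median of $u$ over $\partial B(x_{0},r)$, so the set $\{\, s \in \partial B(x_{0},r) : u(s) \leq u(x_{0}) \,\}$ has arc length at least $\pi r$, i.e.\ at least half the circle. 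Every such $s$ is distinct from $x_{0}$, hence $\varphi(s) < u(s) \leq u(x_{0}) = \varphi(x_{0})$, and therefore
\[
\bigl|\,\{\, s \in \partial B(x_{0},r) : \varphi(s) < \varphi(x_{0}) \,\}\,\bigr| \;\geq\; \pi r \qquad \textrm{for all sufficiently small } r > 0 \, .
\]
In words: the level curve $\{\varphi = \varphi(x_{0})\}$ may not carve a ``majority arc'' (half the circle or more) out of the side $\{\varphi > \varphi(x_{0})\}$ of small circles about $x_{0}$.

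Next I would convert this into a curvature inequality by a Taylor expansion. Rotating and translating so that $x_{0} = 0$ and $D\varphi(0) = (0,c)$ with $c = |D\varphi(0)| > 0$, and writing $\varphi_{xx}(0) = a$, $\varphi_{xy}(0) = b$, $\varphi_{yy}(0) = d$, one computes $\Delta_{1}\varphi(0) = \Delta\varphi(0) - \langle D^{2}\varphi(0)\,D\varphi(0),\, D\varphi(0)\rangle/|D\varphi(0)|^{2} = (a+d) - d = a$, so the goal becomes $a \leq 0$. Parametrizing $\partial B(0,r)$ by $\theta \mapsto (r\cos\theta, r\sin\theta)$ and expanding,
\[
\varphi(r\cos\theta, r\sin\theta) - \varphi(0) \;=\; c\,r\sin\theta + \tfrac{r^{2}}{2}\bigl(a\cos^{2}\theta + 2b\sin\theta\cos\theta + d\sin^{2}\theta\bigr) + o(r^{2}) \, ,
\]
with remainder $o(r^{2})$ uniform in $\theta$ because $\varphi \in C^{2}$. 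Off fixed neighborhoods of $\theta = 0$ and $\theta = \pi$ the term $c\,r\sin\theta$ dominates for small $r$, so there $\{\varphi < \varphi(0)\}$ coincides with $\{\sin\theta < 0\}$; near the two crossover angles, where the quadratic form equals $a$, balancing $c\,r\sin\theta$ against $-\tfrac{r^{2}}{2}a$ displaces the two endpoints of the majority arc by $O(r)$ so that its total length becomes $\pi r - \tfrac{a}{c}\,r^{2} + o(r^{2})$. Comparing with the lower bound $\pi r$ above and letting $r \to 0^{+}$ forces $a \leq 0$, i.e.\ $-\Delta_{1}\varphi(x_{0}) \geq 0$, which is exactly the supersolution condition.

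I expect the last step to be the main obstacle: making the endpoint-displacement computation rigorous requires controlling the $o(r^{2})$ remainder uniformly on the $O(r)$-wide angular windows around $\theta = 0$ and $\theta = \pi$, and ruling out spurious extra components of $\{\varphi < \varphi(0)\}$ on small circles. Both points are manageable — one can apply an implicit-function/monotonicity argument to $\theta \mapsto \varphi(r\cos\theta, r\sin\theta) - \varphi(0)$ inside those windows (where its $\theta$-derivative is $\approx \pm c\,r \neq 0$) and use $|\sin\theta| \geq \delta$ outside them — but that is where the care is needed. The remaining ingredients (the symmetry reduction, the passage from uniqueness of medians to the arc-length inequality, and the identity $\Delta_{1}\varphi(0) = a$) are routine.
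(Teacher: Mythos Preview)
Your argument is correct and follows the same overall strategy as the paper's proof: pass from the median identity for $u$ to an inequality involving only the test function $\varphi$, then convert that inequality into $-\Delta_{1}\varphi(x_{0}) \geq 0$ via an $r \to 0$ asymptotic. The execution differs in two respects. First, the paper exploits the comparison $\varphi < u$ through monotonicity of the median operator, obtaining $\med_{\partial B(x_{0},r)}\varphi < \med_{\partial B(x_{0},r)} u = \varphi(x_{0})$, whereas you exploit it through set containment, obtaining the arc-length bound $|\{\varphi < \varphi(x_{0})\}| \geq \pi r$; these are dual formulations of the same comparison. Second, and more substantially, the paper dispatches the asymptotic step in one line by quoting the identity $\varphi(x_{0}) - \med_{\partial B(x_{0},r)}\varphi = -\tfrac{r^{2}}{2}\Delta_{1}\varphi(x_{0}) + o(r^{2})$ from \cite{hartenstine:asc11}, while you derive the companion expansion $|\{\varphi < \varphi(x_{0})\}| = \pi r - \tfrac{\Delta_{1}\varphi(x_{0})}{|D\varphi(x_{0})|}\,r^{2} + o(r^{2})$ from scratch via Taylor expansion and an endpoint-displacement analysis. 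Your route is therefore self-contained but carries exactly the technical burden you flagged (uniform control of the remainder near $\theta = 0,\pi$); the paper's route is shorter but outsources that same computation to the cited reference.
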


\begin{proof} 
Let $x_{0} \in \Omega$, and let $\varphi\in C^2(\Omega)$ be a test function that touches $u$ from below at $x_0$. 
Using (\ref{loc-mvp}), (\ref{supsol}) and the monotonicity of the median, we have  
\[
\med_{\partial s \in B(x_0,r)}{ \left\{ \, \varphi(s) \, \right\}  } < \med_{s \in \partial B(x_0,r)}{ \left\{ \, u(s) \, \right\}  } = u(x_0) = \varphi(x_0) \, ,
\]
so that
\begin{equation} \label{test-below}
\varphi(x_0) - \med_{s \in \partial B(x_0,r)}{ \left\{ \, \varphi(s) \, \right\}  } > 0 \, . 
\end{equation}
Identity (11) in \cite{hartenstine:asc11}  establishes that
\[
\varphi(x_0) - \med_{s \in \partial B(x_0,r)}{ \left\{ \, \varphi(s) \, \right\}  } = -\frac{r^2}{2} \Delta_{1} \varphi(x_{0}) + o( r^2 ) \, ,
\]
so dividing by $r^{2}/2$ and sending $r \to 0$ in (\ref{test-below})  shows that $u$ is a 
viscosity supersolution of (\ref{1-harm}).  Similarly, $u$ is a viscosity subsolution of (\ref{1-harm}), proving that 
$u$ is 1-harmonic in the viscosity sense.
\end{proof}

The key result above, identity (11) of \cite{hartenstine:asc11}, is the reason for including the 
prefactor $|Du|$ in our definition of the $1$-Laplacian $\Delta_{1}$.  Not coincidentally, our definition of
$\Delta_{1}$ coincides with the elliptic part of the operator in the widely studied mean curvature equation,
\[
u_{t} -  |Du| \div{\left(\frac{Du}{ \, |Du| \, }\right)} = 0 \, .
\]
Previous work relating this parabolic equation, medians, and median-like  
operators (\cite{cao:gce03},\cite{catte:msm95},\cite{kohn:dcb06},\cite{oberman:cmd04},\cite{ruuth:cgm00})  
motivated the present work.  

\end{section}

\begin{section}{Functions of least gradient}

In a series of papers summarized in \cite{ziemer:flg99}, Ziemer et al. analyzed  
functions of least gradient, i.e., functions $v \in BV(\Omega) \cap C(\overline{\Omega})$ 
with minimal total variation and prescribed values along $\partial \Omega$.
They proved, in particular, that a unique function of least gradient exists if $\Omega$
is strictly convex and the boundary function $g : \partial \Omega \to \R$ is continuous.   
One can assemble this
function of least gradient by constructing its level sets: they are minimal surfaces whose
boundaries are determined by the levels of $g$.  

This, of course, is exactly how we constructed solutions of the Dirichlet problem earlier, 
since one-dimensional minimal surfaces in the plane are lines.  Our method did
not produce a unique solution, however, as we only considered local conditions; the global 
constraint of minimizing total variation renders functions of least gradient unique.   
It seems reasonable,
then, that there should be a strong connection between the global median value property 
(\ref{glob-mvp}) and functions of least gradient.  To that end,  we propose
\begin{conjecture}
Suppose that  $\Omega$ is strictly convex and that $g : \partial \Omega \to \R$ is continuous,
and let $u^{*}$ be the function of least gradient on $\Omega$ that equals $g$ on $\partial \Omega$.
\begin{enumerate}
\item
There exists a unique continuous solution $u$ of (\ref{glob-mvp}) such that 
$u=g$ on $\partial \Omega$.
\item
$u = u^{*}$.
\end{enumerate}
\end{conjecture}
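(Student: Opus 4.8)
\medskip
\noindent\textbf{A strategy toward the conjecture.} The plan is to take $u^{*}$ as the candidate in part (1) and to establish the two implications ``$u^{*}$ satisfies (\ref{glob-mvp})'' and ``every continuous solution of (\ref{glob-mvp}) with boundary values $g$ is a function of least gradient.'' Granting these, existence in (1) and the identity in (2) are immediate, and uniqueness in (1) follows because the function of least gradient is unique on a strictly convex domain \cite{ziemer:flg99}. The starting point is that (\ref{glob-mvp}) implies (\ref{loc-mvp}), so Theorem \ref{straight} applies to any solution $u$ of the global problem: each path-connected component of a level set of $u$ is a line segment with endpoints on $\partial\Omega$, or a region with nonempty interior bounded by such segments. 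Consequently both implications reduce to understanding when a family of chords of $\Omega$, indexed monotonically by level, is or is not perimeter-minimizing.

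\emph{Forward implication.} Recall from \cite{ziemer:flg99} that the super-level sets $E_{t}=\{u^{*}>t\}$ are area-minimizing in $\Omega$, with relative boundaries $\partial E_{t}\cap\Omega$ consisting of chords, and that these boundaries are totally ordered and pairwise non-crossing. Fix $x\in\Omega$, a ball $B(x,r)\Subset\Omega$, and put $c=u^{*}(x)$. If $u^{*}\equiv c$ on a neighbourhood of $\overline{B(x,r)}$ the median identity is trivial; otherwise the level line through $x$ is, by Theorem \ref{straight}, a portion of a chord of $\Omega$, hence a diameter of $B(x,r)$, splitting the ball into two half-disks. The key point is to use area-minimality to exclude the ``ridge-line'' configuration ruled out in the variant of Proposition \ref{max-p} illustrated in Figure \ref{ridge_line}: were $u^{*}-c$ to fail to have a single sign on each half-disk, the set $\{u^{*}=c\}$ would re-enter $B(x,r)$ along a second chord (or a flat region), and replacing the resulting pair of chords in $\partial E_{c}$ by the shorter reconnection of their endpoints on $\partial\Omega$ would strictly decrease perimeter, a contradiction. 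With that configuration excluded, $u^{*}\geq c$ on one closed half-disk and $u^{*}\leq c$ on the other (a flat level region counts toward both), so the two arcs into which the diameter divides $\partial B(x,r)$ carry values $\geq c$ and $\leq c$ and each has length $\pi r$; hence $\med_{s\in\partial B(x,r)}\{u^{*}(s)\}=c=u^{*}(x)$.

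\emph{Converse.} Let $u$ be any continuous solution of (\ref{glob-mvp}) with $u=g$ on $\partial\Omega$. By Theorem \ref{straight} the relative boundary of $\{u>t\}$ is, for each $t$, a union of chords of $\Omega$ sharing the trace of $\partial E_{t}^{*}$ on $\partial\Omega$. If some such boundary is not perimeter-minimizing, then re-pairing two of its endpoints on $\partial\Omega$ shortens the total length; equivalently $u$ possesses two level-$t$ chords whose endpoint-pairing can be shortened, which forces them into a non-minimal nesting. One then produces an admissible ball $B(x,r)\Subset\Omega$ centred on one of these chords and large enough to straddle the other: the level line through $x$ is a diameter of the ball, but the super-level set $\{u>t\}$ occupies strictly more than half of $\partial B(x,r)$ in arclength, so $\med_{s\in\partial B(x,r)}\{u(s)\}>t=u(x)$, contradicting (\ref{glob-mvp}). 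Therefore every $\partial\{u>t\}\cap\Omega$ is area-minimizing, $u$ is a function of least gradient, and $u=u^{*}$ by uniqueness on strictly convex domains \cite{ziemer:flg99}; in particular the solution of (\ref{glob-mvp}) is unique.

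The main obstacle is the ``re-chaining'' step common to both implications: one must show that whenever a monotone family of chord systems in $\Omega$ fails to be perimeter-minimizing at some level, the failure is detectable by a ball that is compactly contained in $\Omega$. Since the admissible radii shrink near $\partial\Omega$, a defect sitting close to the boundary may be awkward to capture with a single interior ball, and handling it should require the strict convexity of $\Omega$ in an essential way --- strict convexity forces chords to meet $\partial\Omega$ transversally and keeps neighbouring chords a definite distance apart --- together with careful bookkeeping for flat, positive-area level components and for vertices at which several chords meet. We also emphasise that the PDE information already available is insufficient here: by Theorem \ref{LMVP-1harm} every solution of (\ref{glob-mvp}) is $1$-harmonic in the viscosity sense, but the $1$-Laplacian admits no comparison principle, so neither part of the conjecture can follow from $1$-harmonicity alone; the argument must exploit the \emph{global} median identity directly, through the straddling-ball mechanism above.
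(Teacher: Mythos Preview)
The paper does not prove this statement: it is explicitly labeled a \emph{conjecture}, and the only supporting evidence offered is the verification for the single explicit family $\{u_{\alpha}\}$ from equation (\ref{u-alpha}), together with a pointer to Juutinen's work \cite{juutinen:pha05}. There is therefore no proof in the paper against which to compare your proposal.

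That said, your outline is a reasonable and natural strategy, and it goes well beyond what the paper attempts. A few comments on the argument itself. In the forward implication you invoke Theorem \ref{straight} to conclude that the level line of $u^{*}$ through $x$ is a chord; this is circular, since Theorem \ref{straight} applies only to functions already known to satisfy (\ref{loc-mvp}), which is precisely what you are trying to establish for $u^{*}$. The conclusion you want is available, but it comes directly from the structure theory in \cite{ziemer:flg99} (level boundaries of least-gradient functions are minimal, hence straight in the plane), which you have already cited a few lines earlier; simply drop the appeal to Theorem \ref{straight} there. Second, the perimeter-shortening step in both directions (``replacing the resulting pair of chords by the shorter reconnection'') is asserted rather than proved: for two disjoint chords $AB$ and $CD$ of a convex domain it is not automatic that one of the re-pairings $AC\cup BD$ or $AD\cup BC$ is strictly shorter, and the correct comparison depends on which region is being bounded and how the endpoints interlace on $\partial\Omega$. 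This is exactly the ``re-chaining'' obstacle you flag at the end, and you are right that it is the heart of the matter; as written, neither implication is complete without it.

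In short: your proposal is an honest strategy sketch with the principal gap correctly identified, while the paper offers no proof at all. The circular citation of Theorem \ref{straight} should be removed, and the re-pairing inequality needs a precise formulation before either direction can be made rigorous.
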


It is easy to see that this conjecture holds in certain cases.  For example, among the family
$\{ u_{\alpha} \}$ of solutions of (\ref{loc-mvp})  defined by equation
(\ref{u-alpha}) earlier, only $u_{1/\sqrt{2}}$ has the global median value property.
It follows easily from the coarea formula that this function's total 
variation is smaller than that of any other $u_{\alpha}$, and it is indeed the function
of least gradient on the unit circle with this boundary data.

Progress on this conjecture (and on other questions related to the results of this paper) will likely benefit from 
Juutinen's work \cite{juutinen:pha05} on $p$-harmonic functions and functions of least gradient.

\end{section}

\bigskip
\bigskip

\bigskip
\bigskip

\end{document}